\pgfplotsset{compat=newest} 
\pgfplotsset{plot coordinates/math parser=false} 
\newlength\figureheight 
\newlength\figurewidth  
\newtheorem{theorem}{Theorem}
\newtheorem{remark}{Remark}
\newtheorem{corollary}{Corollary}
\renewcommand{\phi}{\varphi}
\newtheorem{definition}{Definition}[section]
\title{\LARGE \bf Robust Model Predictive Control  \\ with Adjustable Uncertainty Sets}
\author{Yeojun Kim, %
Xiaojing Zhang, %
Jacopo Guanetti, %
Francesco Borrelli 
\thanks{The authors are with the Department of Mechanical Engineering, University of California at Berkeley, CA, USA. E-mail: {\tt\footnotesize $\left\lbrace \textnormal{yk4938, xiaojing.zhang, jacopoguanetti, fborrelli}\right\rbrace$
@berkeley.edu}}%
}
\begin{document}
	\maketitle
	\thispagestyle{empty}
	\pagestyle{empty}
	\begin{abstract}
	In this paper, we present Robust Model Predictive Control (MPC) problems with \emph{adjustable uncertainty sets}. In contrast to standard Robust MPC problems with known uncertainty sets, we treat the uncertainty sets in our problems as additional decision variables. In particular, given a metric for adjusting the uncertainty sets, we address the question of determining the optimal size and shape of those uncertainty sets, while ensuring robust constraint satisfaction. The focus of this paper is on ensuring constraint satisfaction over an \emph{infinite horizon}, also known as persistent feasibility. We show that, similar as in standard Robust MPC, persistent feasibility can be guaranteed if the terminal set is an invariant set with respect to both the state of the system \emph{and} the adjustable uncertainty set. We also present an algorithm for computing such invariant sets, and illustrate the effectiveness of our approach in a cooperative adaptive cruise control application.
	

	
	\end{abstract}
	
	\section{Introduction} 
	Robust control concerns itself with designing controllers for uncertain systems \cite{dorato1987historical, petersen2014robust}. 
	In the field of Model Predictive Control (MPC), Robust MPC has been first introduced in \cite{campo1987robust,low2000robust,wu2001lmi}. Commonly, Robust MPC deals with problems where the uncertainty sets are known a priori. The objective in this case is to design a controller that is robust against all disturbance realizations from this uncertainty set.

    Recently, a new paradigm in robust control has been introduced where uncertainty sets are not assumed to be known, but can be \emph{adjusted} instead, i.e., their size is not fixed but to be determined. These kinds of problems are referred to as robust control problems with \emph{adjustable uncertainty sets} \cite{zhang2014selling, zhang2017robust}.
    
    Robust control problems with adjustable uncertainty sets arise in several applications: for example, in reserve provision problems, the adjustable uncertainty set is interpreted as a \emph{reserve capacity} that a system can offer to third parties and for which it receives (monetary) reward \cite{zhang2014selling,  VrettosTPS2016, zhang2017robust, Bitlisglioglu_TAC2017, ReyZhang2018}.
    In this case, the maximal reserve capacity can be computed by maximizing the size of the uncertainty set. The challenge when determining a reserve capacity is to ensure that, for every admissible reserve demand, the system is indeed able to meet this demand without violating its constraints.
    Another problem that can be formulated as a robust control problem with adjustable uncertainty sets arises in a robustness analysis for determining the limits of robustness of a given system (a system's resilience against disturbances).

	The existing literature on robust control with adjustable uncertainty sets has mainly focused on designing controllers over a finite horizon and deriving computationally tractable convex approximations \cite{zhang2014selling,VrettosTPS2016, zhang2017robust, Bitlisglioglu_TAC2017}. In this paper, we consider the \emph{infinite-horizon} case, where our objective is to design a controller that satisfies the constraints for all times.

    Specifically, we extend the work of \cite{zhang2017robust} to the infinite horizon case by employing ideas from standard Robust MPC. The resulting algorithm, called \emph{Robust MPC with Adjustable Uncertainty Sets (RMPC-AU)}, ensures satisfaction of state and input constraints for all future time steps, a notion known as persistent feasibility in the literature \cite{kouvaritakis2016model}. The contributions of this paper can be summarized as follows:
	\begin{itemize}
	 \item We introduce the notion of adjustable control invariant set and adjustable positive invariant sets, and show that these sets can be used to ensure persistent feasibility. We also provide algorithms for computing these sets.
	 \item Using ideas from standard Robust MPC, we show that, by appropriately parametrizing the control policies and uncertainty sets, computationally tractable reformulations can be derived.
	\end{itemize}

    We illustrate our approach on a cooperative adaptive cruise control (CACC) problem where the objective is to minimize the distance gap between vehicles while maximizing the uncertainty set for the future states of the other vehicles.
    Here, the challenge is that there is a trade-off between these two objectives and it is not straightforward to address it with standard methodologies \cite{lefevre2016learning, alam2014guaranteeing, Guanetti2018}.
    Using our approach, we can easily handle this challenge by simultaneously computing the input sequence and the allowable uncertainty set in a single optimization problem.
    Moreover, we can gain insights on the relationship between the distance tracking performance and the uncertainty set for the predicted states of the other vehicles.
     
    The remainder of this paper is organized as follows. Section \ref{sec:problemformulation} defines the RMPC-AU formulation and introduces the conditions which guarantee recursive feasibility for our problem. Section \ref{sec:Tractable} introduces the policy and the uncertainty set approximations and the terminal constraint formulation, and shows the final tractable formulation of the optimization problem for RMPC-AU. Section \ref{sec:casestudy} illustrates our method on a CACC example. Section \ref{sec:conclusions} concludes the work. The Appendix provides the derivations for tractable reformulation of our optimal control problem and the method to compute the adjustable control invariant set.
	
	\subsection{Notation} \label{sec:not}
    We denote by $x_{k\mid t}\in\mathbb R^{n_x}$ the state at time $k+t$, predicted at time $t$. 
    Furthermore, we define $\bm{x_{k\mid t}} :=[x_{0\mid t},x_{1\mid t},...,x_{k-1\mid t}]^{\top} \in\mathbb{R}^{kn_x}$.
	Given a set $\mathbb{X} \subseteq \mathbb{R}^{n_x}$, we denote by $\mathbb{X}^k$ its $k$-fold Cartesian product, i.e.,\  $\mathbb{X}^k := \underset{i=0}{\stackrel{k-1}{\bigtimes}} \mathbb{X} $. Finally, we denote by $\mathcal{P}(\mathbb{R}^{n_w})$ the power set of $\mathbb{R}^{n_w}$, and $\mathbb{N}_+$ is the set of non-negative integers.
	Also, $A \Rightarrow  B$ means that A implies B, i.e., if A is true then B must be true.
	\section{Problem Formulation} \label{sec:problemformulation}
	In this section we first review the finite-horizon robust optimal control problem with adjustable uncertainty sets from \cite{zhang2017robust}, before we  introduce the concept of Robust Model Predictive Control with adjustable uncertainty sets. 
	Also, $A \Rightarrow  B$ means that A implies B, i.e., if A is true then B must be true.
	
	\subsection{Robust  Control with Adjustable Uncertainty Sets}
	Following \cite{zhang2017robust}, we consider  discrete-time uncertain systems of the form
	\begin{equation}
	x_{k+1} = Ax_{k} + Bu_{k} + E w_{k},
	\label{linear_model}
	\end{equation}
	where $x_k \in \mathbb{R}^{n_x} $ is the state, $u_k \in \mathbb{R}^{n_u} $ is the input, and $w_k \in \mathbb{R}^{n_w}$ is the uncertain disturbance at time step $k$.
	We consider polytopic state and input constraints of the form
	\begin{subequations}\label{stateinputconstraints}
	\begin{align}
	x_k \in \mathbb{X} := \{ x \in \mathbb{R}^{n_x} : F_x x \leq f_x \}, \label{statecons} \\
	u_k \in \mathbb{U} := \{ u \in \mathbb{R}^{n_u} : F_u u \leq f_u \},
	\end{align}
	\end{subequations}
	where $F_x \in \mathbb{R}^{n_f \times n_x}, f_x \in \mathbb{R}^{n_f}, F_u \in \mathbb{R}^{n_g \times n_u}, f_u \in \mathbb{R}^{n_g}$ are known matrices. The uncertainty $w_k$ is assumed to belong to the \emph{adjustable} uncertainty set $\mathbb{W}_k$, i.e.,
	\begin{equation}\label{uncertaintycons}
	\begin{aligned}
	w_k \in \mathbb{W}_k \subseteq \mathbb{R}^{n_w}.
	\end{aligned}
	\end{equation}
	Here, $\mathbb{W}_k$ is called ``adjustable" because its size is not defined a priori, but to be determined later. 
	Given the planning horizon $N$, we denote by $\phi_{k\mid t}(\bm{u_t, w_t})$ the state at time $t+k$  predicted at time $t$ using the model \eqref{linear_model} subject to the inputs $\bm{u_t}=[u_{0\mid t}, u_{1\mid t}, ..., u_{N-1\mid t}]$ and the uncertainties $\bm{w_t} = [w_{0\mid t}, w_{1\mid t}, ..., w_{N-1\mid t}]$. 
	
    Our goal is to determine the optimal input policy and the uncertainty sets, over the  horizon $N$, such that constraints \eqref{stateinputconstraints} are satisfied.
We consider state feedback policies $\bm{\pi_t}(\cdot) := [\pi_{0\mid t}(\cdot),\pi_{1\mid t}(\cdot),...,\pi_{N-1\mid t}(\cdot)]$, where $\pi_{k\mid t}(\cdot) : \mathbb{R}^{n_x} \to \mathbb{R}^{n_u}$, such that the input at time step $k$ is given by $u_{k\mid t} =\pi_{k\mid t}(x_{k\mid t})$.

    Given a metric $\rho(\cdot) : \mathcal{P}(\mathbb{R}^{n_w}) \to \mathbb{R}$ for adjusting the uncertainty set, the control objective is to minimize a ``nominal" cost $J(\cdot,\cdot)$ while maximizing the uncertainty set with respect to $\rho(\cdot)$. 
    For instance, $\rho(\cdot)$ can represent the volume of the uncertainty set, but is in general problem-dependent \cite{zhang2017robust}.
	Therefore, the finite time constrained optimal control problem with adjustable uncertainty sets at time $t$ is given by
    \begin{equation}
    \label{bare_optimal_probem}
    \begin{array}{ll}
    \displaystyle\min_{\bm{\pi_t}(\cdot), \mathcal{W}_{t} } &  \Big\{ \underset{ \bm{w_t} \in \mathcal{W}_{t}} {\textnormal{max}} \, J(\bm{\pi_{t}(\bm{x_t}), w_t}) \Big\} - \displaystyle\sum_{k=0}^{N-1}\lambda_k \rho (\mathbb{W}_{k\mid t}) \\
    \hphantom{.}\textrm{s.t.} & \\
    	& x_{k+1\mid t} = Ax_{k\mid t} +B \pi_{k\mid t} (x_{k\mid t}) +Ew_{k\mid t}, \\
    	& x_{k \mid t} \in \mathbb{X}, \, \pi_{k\mid t} ( x_{k\mid t} ) \in \mathbb{U}, \, \forall \, w_{k\mid t} \in \mathbb{W}_{k\mid t}, \\
    	& \forall \, k = 0,...,N-1,\\
        &  x_{0\mid t} = x(t), \,\, (x_{N\mid t},\mathbb{W}_{N-1\mid t}) \in \mathbb{F},
    \end{array}
    \end{equation}
    where $\mathcal{W}_{t} := \mathbb{W}_{0\mid t} \times \mathbb{W}_{1\mid t} \times \dots \times \mathbb{W}_{N-1\mid t}$; $J(\bm{\pi_{t}(\bm{x_t}), w_t})  := \ell_f( \phi_{N\mid t}(\bm{\pi_{t}(\bm{x_t}), w_t})  ) + \sum_{k=0}^{N-1} \ell(  \phi_{k\mid t}( \bm{\pi_{t}(\bm{x_t}), w_t} ) )$ and $\ell$ and $\ell_f$ are linear stage costs. $x(t)$ and $\lambda_k \geq 0$ are the initial condition and the weighting factors, respectively. 


	
	Notice that problem \eqref{bare_optimal_probem} imposes a so-called terminal constraint 
	\begin{equation} \label{term_cons}
	 (x_{N\mid t},\mathbb{W}_{N-1 \mid t}) \in \mathbb F \subseteq \mathbb{X} \times \mathcal P (\mathbb{R}^{n_w}).
	\end{equation}
	As we will see, the terminal set $\mathbb{F}$ plays a crucial role in ensuring persistent feasibility of our proposed controller, and is discussed in the following sections in detail.
	

	\subsection{Robust MPC with Adjustable Uncertainty Set} \label{sec:mpc_au}
	
	The Robust MPC with Adjustable Uncertainty Sets (RMPC-AU) solves, at time $t$, the finite time constrained optimal control problem \eqref{bare_optimal_probem}. Let
	\begin{subequations} \label{eq:sol}
    \begin{align}
	 \bm{\pi^{\ast}_t}(\cdot) &= [\pi^{\ast}_{0 \mid t}(\cdot), \pi^{\ast}_{1 \mid t}(\cdot) , ... , \pi^{\ast}_{N-1 \mid t}(\cdot) ], \\
	 \mathcal{W}^{\ast}_t &= [\mathbb{W}^{\ast}_{0\mid t},\, \mathbb{W}^{\ast}_{1\mid t}, ..., \, \mathbb{W}^{\ast}_{N-1\mid t}]
	\end{align}
	\end{subequations}
	be the solution of the optimal control problem~\eqref{bare_optimal_probem}.
	Then, the first input $\pi^{\ast}_{0 \mid t} (x_t) $ is applied to the system.
	At the next time step $t + 1$, a new optimal control problem in the form of \eqref{bare_optimal_probem} based on new measurements of the state is solved over a shifted horizon, yielding a \textit{moving} or \textit{receding horizon} control strategy. The control law is given by
    \begin{equation} \label{eq:opt_sol}
	    u_t = \pi^{\ast}_{0 \mid t}(x_t),
	\end{equation}
	and the closed loop system can be written as
	\begin{equation} \label{eq:cl_sys}
	    x_{t+1} = Ax_t + B\pi^{\ast}_{0 \mid t}(x_t) + E w_t,
	\end{equation}
    where $w_t \in \mathbb{W}^{\ast}_{0\mid t}$.
    
	\subsection{Recursive Feasibility of RMPC-AU} \label{sec:feasibility}
	Our objective is to design a controller which ensures state and input constraint satisfaction at all times. 
	In this section we formally define the notion of recursive feasibility for RMPC-AU \eqref{bare_optimal_probem}-\eqref{eq:cl_sys}, before we provide sufficient conditions which guarantee it.
	\begin{definition}[Recursive feasibility for RMPC-AU] \label{def_recur_feas}
		 RMPC-AU \eqref{bare_optimal_probem}-\eqref{eq:cl_sys} is said to be \textit{recursively feasible}, if the existence of a solution $(\bm{\pi^{\ast}_0},\mathcal{W}^{\ast}_0)$ for \eqref{bare_optimal_probem} at  time  $t=0$ implies feasibility of MPC-AU \eqref{bare_optimal_probem}-\eqref{eq:cl_sys} for all time $t > 0$.
	\end{definition}
	
    In general, recursive feasibility is not guaranteed without imposing additional conditions on the terminal constraint $\mathbb F$. Similar as in standard Robust MPC \cite{kouvaritakis2016model}, the recursive feasibility of RMPC-AU can be achieved by choosing the terminal set $\mathbb F$ to be an invariant set.
    \begin{definition}[Adjustable Control Invariant Set] \label{C}
		A set $ \mathbb{C}_{\textnormal{adj}}  \subseteq \mathbb{X} \times \mathcal P(\mathbb{R}^{n_w})$ is said to be an \emph{adjustable control invariant set} for the system \eqref{linear_model} subject to the constraints \eqref{stateinputconstraints} and the uncertainty \eqref{uncertaintycons}, if
		\begin{equation}\label{eq:adjCtrlInvSet}
		\begin{array}{llll}
		\forall (x_t, \mathbb{W}_{t-1}) \in  \mathbb{C}_{\textnormal{adj}} ~ \Rightarrow \, \exists \, u_t \in \mathbb{U}, \, \mathbb{W}_{t} \subseteq \mathbb{R}^{n_w}\\
		\qquad  (x_{t+1}, \mathbb{W}_{t}) \in \mathbb{C}_{\textnormal{adj}},~\forall w_{t} \in  \mathbb{W}_{t}, \, t\in \mathbb{N}_+.
		\end{array}
		\end{equation}
	\end{definition}
	Notice that Definition~\ref{C} is a generalization of the standard robust control invariant set, which can be recovered by fixing $\mathbb{W}_{t-1}=\mathbb W_t$ in \eqref{eq:adjCtrlInvSet}.
Furthermore, when $\mathbb W_{t-1}=\mathbb W_t = \{0\}$, \eqref{eq:adjCtrlInvSet} recovers the standard control invariant set \cite{borrelli2017predictive}.
	We now have the following result:
	\begin{theorem} \label{recur_theo}
		Consider  RMPC-AU \eqref{bare_optimal_probem}-\eqref{eq:cl_sys} with $N\geq1$. If the terminal set $\mathbb{F}$ in \eqref{bare_optimal_probem} is an adjustable control invariant set for the system \eqref{linear_model}-\eqref{uncertaintycons}, then the RMPC-AU is recursively feasible. 
	\end{theorem}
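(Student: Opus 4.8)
The plan is a standard induction on the time index. Assuming problem~\eqref{bare_optimal_probem} is feasible at time $t$ (so that RMPC-AU applies $u_t=\pi^{\ast}_{0\mid t}(x(t))$ and the closed loop~\eqref{eq:cl_sys} evolves), I construct a feasible — not necessarily optimal — solution of~\eqref{bare_optimal_probem} at time $t+1$. Since feasibility at $t=0$ is assumed, Definition~\ref{def_recur_feas} then yields recursive feasibility. Throughout, only feasibility of the constructed $t+1$ decision is needed, never optimality.

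Concretely, fix the optimal solution $(\bm{\pi^{\ast}_t},\mathcal{W}^{\ast}_t)$ of~\eqref{bare_optimal_probem} at time $t$, with associated predicted trajectory $x_{0\mid t}=x(t),x_{1\mid t},\dots,x_{N\mid t}$. The realized disturbance satisfies $w_t\in\mathbb{W}^{\ast}_{0\mid t}$, so the measured state $x(t+1)=Ax(t)+B\pi^{\ast}_{0\mid t}(x(t))+Ew_t$ equals the value of $x_{1\mid t}$ for $w_{0\mid t}=w_t$. I build the candidate at $t+1$ by time-shifting the tail of the time-$t$ solution and appending one stage: for $k=0,\dots,N-2$ set $\tilde\pi_{k\mid t+1}(\cdot):=\pi^{\ast}_{k+1\mid t}(\cdot)$ and $\tilde{\mathbb{W}}_{k\mid t+1}:=\mathbb{W}^{\ast}_{k+1\mid t}$, and define the last stage $(\tilde\pi_{N-1\mid t+1},\tilde{\mathbb{W}}_{N-1\mid t+1})$ from the invariance of $\mathbb{F}$ as below (for $N=1$ only this last stage is present).

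Next I verify the candidate. Under the identification $w_{k\mid t+1}=w_{k+1\mid t}$, the trajectory predicted at $t+1$ from $x(t+1)$ coincides, stage by stage, with the sub-trajectory $x_{1\mid t},\dots,x_{N\mid t}$ of the time-$t$ problem obtained by holding $w_{0\mid t}=w_t$ fixed; because the time-$t$ solution satisfies the state and input constraints of~\eqref{bare_optimal_probem} for \emph{all} disturbances in $\mathcal{W}^{\ast}_t$, it does so in particular for every realization with $w_{0\mid t}=w_t$, whence $x_{k\mid t+1}\in\mathbb{X}$ and $\tilde\pi_{k\mid t+1}(x_{k\mid t+1})\in\mathbb{U}$ for all $w_{k\mid t+1}\in\tilde{\mathbb{W}}_{k\mid t+1}$, $k=0,\dots,N-2$. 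The terminal constraint of the time-$t$ problem likewise gives $(x_{N-1\mid t+1},\mathbb{W}^{\ast}_{N-1\mid t})\in\mathbb{F}$ for every such realization, and hence $x_{N-1\mid t+1}\in\mathbb{X}$. For the appended stage I invoke Definition~\ref{C}: for each state $\xi$ attained by $x_{N-1\mid t+1}$, from $(\xi,\mathbb{W}^{\ast}_{N-1\mid t})\in\mathbb{F}$ and~\eqref{eq:adjCtrlInvSet} there exist $u_\xi\in\mathbb{U}$ and $\mathbb{W}_\xi\subseteq\mathbb{R}^{n_w}$ with $(A\xi+Bu_\xi+Ew,\mathbb{W}_\xi)\in\mathbb{F}$ for all $w\in\mathbb{W}_\xi$; defining $\tilde\pi_{N-1\mid t+1}$ pointwise by $\tilde\pi_{N-1\mid t+1}(\xi):=u_\xi$ makes the terminal input admissible, and $(x_{N\mid t+1},\tilde{\mathbb{W}}_{N-1\mid t+1})\in\mathbb{F}$ holds once the single set $\tilde{\mathbb{W}}_{N-1\mid t+1}$ is chosen appropriately. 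Granting that choice, the candidate meets all constraints of~\eqref{bare_optimal_probem} at $t+1$, and the induction closes.

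The step that needs care — and the reason $\mathbb{F}$ must be \emph{adjustable} control invariant rather than merely control invariant — is pinning down the single terminal set $\tilde{\mathbb{W}}_{N-1\mid t+1}$: condition~\eqref{eq:adjCtrlInvSet} yields only a per-point witness $\mathbb{W}_\xi$, so keeping the shifted sets at full size leaves $x_{N-1\mid t+1}$ ranging over a region whose witnesses may be incompatible, and without a downward-closure property of $\mathbb{F}$ one cannot simply intersect them. The clean fix is to first shrink each shifted set $\tilde{\mathbb{W}}_{k\mid t+1}$ to a singleton contained in $\mathbb{W}^{\ast}_{k+1\mid t}$ (nonempty, and this preserves the state/input constraints since they already hold for the larger sets): then the predicted $x_{N-1\mid t+1}$ is a single point $\bar\xi$, Definition~\ref{C} is applied exactly once, and one takes $\tilde{\mathbb{W}}_{N-1\mid t+1}:=\mathbb{W}_{\bar\xi}$ and $\tilde\pi_{N-1\mid t+1}(\bar\xi):=u_{\bar\xi}$. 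Everything else is routine receding-horizon bookkeeping, and the special case $N=1$ is covered by the same argument with an empty shifted part.
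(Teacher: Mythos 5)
Your proof is correct, and its skeleton is exactly the paper's argument: induct on $t$, shift the tail of the optimal policy/uncertainty-set sequence from time $t$ to build a candidate at $t+1$, use the fact that the time-$t$ solution is robustly feasible for all realizations with first disturbance equal to the realized $w_t$, and append one final stage supplied by the adjustable control invariance of $\mathbb{F}$. Where you go beyond the paper is in the appended stage: the paper's proof simply asserts that a pair $(\hat u,\hat{\mathbb W})$ ``exists by Definition~\ref{C},'' whereas you correctly observe that \eqref{eq:adjCtrlInvSet} only yields a witness $(u_\xi,\mathbb{W}_\xi)$ for each individual terminal state $\xi$, while problem \eqref{bare_optimal_probem} requires a \emph{single} terminal uncertainty set valid for every realization; since the witnesses for different $\xi$ need not be compatible and $\mathbb{F}$ is not assumed closed under shrinking its set-valued argument, this is a real gap in the short version of the argument. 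Your repair --- shrinking the shifted sets $\tilde{\mathbb W}_{k\mid t+1}$ to singletons inside $\mathbb{W}^{\ast}_{k+1\mid t}$ (admissible because the sets are decision variables and only feasibility, not the cost, matters), so that $x_{N-1\mid t+1}$ is a single point and Definition~\ref{C} is invoked exactly once --- closes it cleanly, and your separate treatment of $N=1$ is consistent. So: same route as the paper, but your version is the airtight one; the paper's buys brevity at the cost of leaving the per-point-witness issue implicit.
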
 
	\begin{proof}
	We prove Theorem \ref{recur_theo} by induction, showing that taking the terminal set $\mathbb{F}$ as an adjustable control invariant set is a sufficient condition for persistent feasibility of RMPC-AU \eqref{bare_optimal_probem}-\eqref{eq:cl_sys}. 
	
	First, consider the optimal control problem \eqref{bare_optimal_probem} at time $t$ with its optimal solution $(\bm{\pi_t}^{\ast}(\cdot), \, \mathcal W^{\ast}_t)$ as described in \eqref{eq:opt_sol}. 
	At the next time step $t+1$, for all $w_t\in\mathbb W_t$,
	the optimal control problem \eqref{bare_optimal_probem} is guaranteed to have at least one feasible solution with the shifted input policy and uncertainty set and the added solution, $(\hat{u},\hat{\mathbb{W}})$, at the last step. Here, $\hat{u}$ and $\hat{\mathbb{W}}$ are the feasible input and the uncertainty set, respectively, which are guaranteed to exist by Definition II.2. Explicitly, this feasible solution at time $t+1$ can be written as:
	\begin{subequations}
    \begin{align}
	 \bm{\pi_{t+1}}(\cdot) &= [\pi^{\ast}_{1 \mid t}(\cdot), \pi^{\ast}_{2 \mid t}(\cdot) , ... , \hat{u} ], \\
	  \mathcal{W}_{t+1} &= [\mathbb{W}^{\ast}_{1\mid t},\, \mathbb{W}^{\ast}_{2\mid t}, ..., \, \hat{\mathbb{W}}].
	\end{align}
	\end{subequations}
 Therefore, by induction, RMPC-AU is recursively feasible.
	\end{proof}
    While Theorem 1 establishes recursive feasibility, solving the optimization problem \eqref{bare_optimal_probem} is computationally intractable since $(i)$ the optimization is performed over general feedback policies, $(ii)$ the optimization of the uncertainty set is performed over arbitrary subsets of $\mathbb R^{n_w}$, and $(iii)$ the constraints must be satisfied robustly for every uncertainty realization \cite{zhang2017robust}. In the following,  we present approximations of RMPC-AU \eqref{bare_optimal_probem}--\eqref{eq:cl_sys} that are computationally tractable while ensuring recursive feasibility.

    \section{Tractable Approximation of RMPC-AU} \label{sec:Tractable}
    In this section we provide a tractable convex formulation for the optimal control problem in \eqref{bare_optimal_probem}.

	
	\subsection{Uncertainty Set and Policy Approximation} \label{uncerainty_policy_back}
	Following \cite{zhang2017robust}, we consider uncertainty sets that are obtained as the affine transformation of a so-called primitive set $\mathbb{S}$, i.e., 
	\begin{align} \label{eq:unceraintyset_prim}
	\mathbb{W}_t  = Y_t \mathbb{S} +y_t, 	\text{ where } (Y_t,\,y_t) \in \mathbb{Y} \subseteq \mathbb R^{n_w \times n_s} \times \mathbb R^{n_w},
	\end{align}
	and 
	\begin{align} \label{primitiveset}
	\mathbb{S} := \{ s \in \mathbb{R}^{n_s} \, : \, G s \leq  g  \}
	\end{align}
	is a fixed polytope with given matrices $G \in \mathbb{R}^{l \times n_s}$ and $g \in \mathbb{R}^{l}$. The set $\mathbb Y$ in \eqref{eq:unceraintyset_prim} allows us to control the shape of the uncertainty set, see \cite[Section 3.2]{zhang2017robust} for details. Given the planning horizon $N$, we define by $\bm{Y_t}$ and $\bm{y_t}$ the compact forms of the $N$ time steps forward consecutive $Y_{k\mid t}$ and $y_{k\mid t}$ given the time $t$, respectively; i.e., $\bm{Y_t} := \textnormal{diag}(Y_{0\mid t},Y_{1\mid t}, \dots, Y_{N-1\mid t})$ and $\bm{y_t} := [y_{0\mid t},y_{1\mid t}, \dots, y_{N-1\mid t}]^{\top}$.

	It is well-known that optimizing over general feedback policies is computationally intractable. 
	To overcome this issue, we adopt the affine disturbance feedback policy of \cite{goulart2006optimization}, which is given by
		\begin{equation}  \label{causalpolicy}
	    u_{k\mid t} = \Tilde{\pi}_{k\mid t}(\bm{s_{k\mid t}}) := p_{k\mid t}+ \sum_{j=0}^{k-1}P_{k,j\mid t}s_{j\mid t},
		\end{equation}
	where $p_{k|t}\in\mathbb R^{n_u}$ and $P_{k,j|t}\in\mathbb R^{n_u \times n_s}$ are parameters.
	Note that the control policy \eqref{causalpolicy} is defined with respect to $s_k$, and not with respect to $w_k$.
	We denote by $\bm{{\Tilde{\pi}}_{t}}(\bm{s_{N-1\mid t}})$ the concatenated form of the $N$ time steps forward consecutive policies at time $t$; i.e., $\bm{{\Tilde{\pi}}_{t}}(\bm{s_{N-1\mid t}}) := [\Tilde{\pi}_{0\mid t}(\bm{s_{N-1\mid t}}), \, \dots, \, \Tilde{\pi}_{N-1\mid t}(\bm{s_{N-1\mid t}})]^{\top} = \bm{p_t} + \bm{P_t}\bm{s_{N-1\mid t}}$, where $\bm{P_t} \in \mathbb{R}^{Nn_u \times Nn_s}$ is a lower triangular block matrix whose elements are defined by  $P_{k,j\mid t}$ and $\bm{p_t} := [p_{0\mid t}, \, \dots, \, p_{N-1\mid t}]^{\top} \in \mathbb{R}^{Nn_u}$.
	
	In the remainder of this paper, we assume that the uncertainty set can be represented as in \eqref{eq:unceraintyset_prim}-\eqref{primitiveset}. Furthermore, by considering the policies \eqref{causalpolicy}, problem \eqref{bare_optimal_probem} can be rewritten as
    \begin{equation}
    \label{linear_adjusted_optimal_probem}
    \begin{array}{ll}
    \displaystyle\min & \Big\{ \underset{ \bm{s_t} \in \mathbb{S}^N } {\textnormal{max}} \, \bm{c}^{\top}\bm{{\Tilde{\pi}}_{t}(\bm{s_{N\mid t}})}  \Big\} - \displaystyle\sum_{k=0}^{N-1}\lambda_k \rho (Y_{k\mid t} \mathbb{S} + y_{k\mid t}) \\
    \hphantom{.}\textrm{s.t.} & \\
    & x_{k+1\mid t} = Ax_{k\mid t} +B\Tilde{\pi}_{k\mid t}(\bm{s_{k\mid t}}) +E w_{k\mid t}, \\
    & \Tilde{\pi}_{k\mid t}(\bm{s_{k\mid t}}) = p_{k\mid t}+ \sum_{j=0}^{k-1}P_{k,j\mid t}s_{j\mid t},  \\
    & w_{k\mid t} = Y_{k\mid t} s_{k\mid t} + y_{k\mid t},\,\, (Y_{k\mid t},\,y_{k\mid t}) \in \mathbb{Y} \\
    & x_{k \mid t} \in \mathbb{X}, \,\, \Tilde{\pi}_{k\mid t}(\bm{s_{k\mid t}}) \in \mathbb{U}, \,\, \forall \, s_k \in \mathbb S \\
    & \forall \, k = 0 , 1 , \dots , N-1 , \\
    & x_{0\mid t} = x(t) ,\; (x_{N\mid t},\,Y_{N-1\mid t} \mathbb{S} + y_{N-1\mid t}) \in \mathbb{F},
    \end{array}
    \end{equation}
    with $( \bm{P_t}, \, \bm{p_t}, \, \bm{Y_t}, \, \bm{y_t})$ as the decision variables. $\bm{c}\in\mathbb R^{Nn_u}$ is a matrix constructed to represent the linear stage cost from the problem data, see e.g. \cite{goulart2006optimization} for details on the construction.

	\subsection{Terminal Set Approximation} \label{uncerainty_policy_back}
	
	\subsubsection{Adjustable Positive Invariant Set}
	
	
	Recall that we restricted our input to be in the form \eqref{causalpolicy} to attain computational tractability. Unfortunately, this restriction also implies that Theorem~1 cannot be directly applied because it guarantees the existence of an input that may not depend on the uncertainty in a linear fashion.
	
	\begin{remark} \label{rm1}
	 If the uncertainty set \eqref{eq:unceraintyset_prim} is assumed to always include $\{0\}$, then Theorem 1 can guarantee recursive feasibility of the reformulated RMPC-AU \eqref{linear_adjusted_optimal_probem},\eqref{eq:sol}-\eqref{eq:cl_sys}, at least with $\mathbb W_{k\mid t} = \{0\}$ for $k=0,\dots,N-1$. This is because the control policy \eqref{causalpolicy} becomes equivalent to the deterministic input actions when the uncertainty set is $\{0\}$.
	\end{remark}
	
	This motivates the following definition:
	\begin{definition}[Adjustable Positive Invariant Set] \label{O}
		A set $ \mathbb{O}_{\textnormal{adj}} \subseteq \mathbb{X} \times \mathcal P (\mathbb{R}^{n_w})$ is said to be an \textit{adjustable positive invariant set} for the uncertain autonomous system $x_{t+1} = g( x_{t}, w_{t})$ subject to the constraints \eqref{statecons} and the uncertainty \eqref{uncertaintycons}, if
		\begin{equation}
		\begin{array}{llll}
		 \forall (x_t, \mathbb{W}_{t-1}) \in  \mathbb{O}_{\textnormal{adj}} ~ \Rightarrow \,  \exists \, \mathbb{W}_{t} \subseteq \mathbb{R}^{n_w} \\ 
		 \qquad \qquad \, (x_{t+1},\mathbb{W}_{t}) \in \mathbb{O}_{\textnormal{adj}},~\forall w_t \in  \mathbb{W}_t, \, t\in \mathbb{N}_+.
		\end{array}
		\end{equation}
	\end{definition}
	Note that an autonomous system  $x_{t+1}=g( x_{t}, w_{t})$ can be achieved from the control system \eqref{linear_model} by fixing a feedback control policy.
	
    
    Now, we are in place to state the following result.
	\begin{corollary} \label{corollary}
	    Consider the RMPC-AU \eqref{linear_adjusted_optimal_probem}, \eqref{eq:sol}-\eqref{eq:cl_sys} for $N\geq1$. If the terminal set $\mathbb{F}$ in \eqref{bare_optimal_probem} is an adjustable positive invariant set for the system \eqref{linear_model}-\eqref{uncertaintycons} in a closed loop with an affine state feedback law $u_t = Kx_t + b $, where $K \in \mathbb R^{n_u \times n_x}$ and $b\in \mathbb R^{n_u}$ are given, then the RMPC-AU \eqref{linear_adjusted_optimal_probem}, \eqref{eq:sol}-\eqref{eq:cl_sys} is recursively feasible.
	\end{corollary}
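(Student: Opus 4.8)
The plan is to repeat the induction of the proof of Theorem~\ref{recur_theo}, now (i) using the fixed affine law $u_t=Kx_t+b$ as the terminal ``padding'' action instead of an arbitrary admissible input, and (ii) checking that the shifted-plus-padded candidate still lies in the admissible classes \eqref{causalpolicy} and \eqref{eq:unceraintyset_prim}. Concretely, I would assume \eqref{linear_adjusted_optimal_probem} is feasible at time $t$ with optimal solution $(\bm{P}^{\ast}_t,\bm{p}^{\ast}_t,\bm{Y}^{\ast}_t,\bm{y}^{\ast}_t)$, so that the predicted states $x_{k\mid t}(\bm{s})$ obey $x_{k\mid t}(\bm{s})\in\mathbb X$ and $\tilde\pi_{k\mid t}(\cdot)\in\mathbb U$ for all $\bm{s}\in\mathbb S^N$, and the terminal pair $(x_{N\mid t}(\bm{s}),\,Y^{\ast}_{N-1\mid t}\mathbb S+y^{\ast}_{N-1\mid t})$ lies in $\mathbb F$ for all $\bm{s}$.

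Next I would build a feasible solution at $t+1$. The input $u_t=p^{\ast}_{0\mid t}$ is applied and some $w_t\in\mathbb W^{\ast}_{0\mid t}=Y^{\ast}_{0\mid t}\mathbb S+y^{\ast}_{0\mid t}$ is realized; pick $\bar s_0\in\mathbb S$ with $w_t=Y^{\ast}_{0\mid t}\bar s_0+y^{\ast}_{0\mid t}$ (possible since $w_t$ is in that set) and freeze it. For stages $k=0,\dots,N-2$ of the new problem I would use the shifted policy obtained from the old stages $1,\dots,N-1$ after substituting $s_{0\mid t}=\bar s_0$; this only changes the affine offsets, $p_{k\mid t+1}=p^{\ast}_{k+1\mid t}+P^{\ast}_{k+1,0\mid t}\bar s_0$, while the gains become the shifted $P^{\ast}_{k+1,j+1\mid t}$, so the result is again causal and of the form \eqref{causalpolicy}; similarly the shifted uncertainty sets $\mathbb W_{k\mid t+1}=\mathbb W^{\ast}_{k+1\mid t}$ are of the form \eqref{eq:unceraintyset_prim}. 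For the last stage I would set $u_{N-1\mid t+1}=Kx_{N-1\mid t+1}+b$; since $x_{N-1\mid t+1}(\bm{s})$ is affine in $\bm{s}$, this too can be written as a policy \eqref{causalpolicy}. Finally I would take the uncertainty set $\hat{\mathbb W}$ furnished by Definition~\ref{O} applied to $\mathbb F$ in closed loop with $u=Kx+b$, evaluated at the terminal state.

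The verification would then go as follows: for $k=0,\dots,N-2$ the new state/input trajectories coincide with stages $1,\dots,N-1$ of the time-$t$ solution evaluated along $(\bar s_0,\bm{s})$, hence satisfy \eqref{statecons} and the input constraint; in particular $x_{N-1\mid t+1}=x_{N\mid t}\in\mathbb X$, and $Kx_{N-1\mid t+1}+b\in\mathbb U$ by admissibility of the terminal law on $\mathbb F$. The new terminal state is $x_{N\mid t}$ propagated one more step under $x_{t+1}=(A+BK)x_t+Bb+Ew_t$ with $w_t\in\hat{\mathbb W}$, and since $(x_{N\mid t},\mathbb W^{\ast}_{N-1\mid t})\in\mathbb F$, Definition~\ref{O} yields $(x_{N\mid t+1},\hat{\mathbb W})\in\mathbb F$ for every such $w_t$, i.e.\ the time-$(t+1)$ terminal constraint holds. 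Recursive feasibility then follows by induction.

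The step I expect to be the main obstacle is the second one: certifying that the appended stage stays inside the allowed parametrizations --- that $\hat{\mathbb W}$ can be taken in the family $\{Y\mathbb S+y:(Y,y)\in\mathbb Y\}$, chosen \emph{uniformly} over the whole terminal tube $\{x_{N\mid t}(\bm{s}):\bm{s}\in\mathbb S^N\}$ (not depending on the realized disturbance), and that $Kx+b\in\mathbb U$ whenever $(x,\cdot)\in\mathbb F$. These are exactly the closure properties that the terminal-set construction in the Appendix must enforce; granting them, the corollary reduces to the same shift-and-append argument as Theorem~\ref{recur_theo}.
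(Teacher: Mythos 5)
Your proposal is correct and takes essentially the same route as the paper: the paper's proof invokes Theorem~\ref{recur_theo} (after observing that an adjustable positive invariant set under the fixed law is an adjustable control invariant set) together with the equivalence of affine state feedback and affine disturbance feedback policies from \cite{goulart2006optimization,zhang2017robust}, which is precisely your shift-and-append induction with the appended stage $u=Kx+b$ rewritten in the form \eqref{causalpolicy}. The closure properties you flag are indeed what the construction supplies: Algorithm~\ref{algo_O_inf} keeps $(Y,y)$ constant in \eqref{eq:parametricuncertaintymodel_O}, so $\hat{\mathbb{W}}$ can be taken equal to $\mathbb{W}^{\ast}_{N-1\mid t}$ itself and hence lies in the family \eqref{eq:unceraintyset_prim}, while admissibility of $Kx+b$ in $\mathbb{U}$ on $\mathbb{F}$ is needed for the reduction to adjustable control invariance and is assumed (though not explicitly enforced in $\mathcal{Z}$) by the paper.
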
 
	\begin{proof}
	By Definition II.2 and III.1, an adjustable positive invariant set for a control system with a fixed policy is an adjustable control invariant set for that control system. Therefore, it follows from Theorem 1 that the RMPC-AU \eqref{bare_optimal_probem}-\eqref{eq:cl_sys} is recursively feasible when its terminal set is an adjustable positive invariant set. 
	
	It is known that the class of admissible affine state feedback policies is equivalent to the class of admissible affine disturbance feedback policies \cite{goulart2006optimization}. Moreover, any affine disturbance feedback policy can be represented as the feedback policy \eqref{causalpolicy} when the uncertainty set is approximated by \eqref{eq:unceraintyset_prim} and \eqref{primitiveset}, as proven in \cite{zhang2017robust}. 
	Therefore, the existence of the affine state feedback policy guarantees the existence of a feedback policy \eqref{causalpolicy} with the uncertainty approximation \eqref{eq:unceraintyset_prim} and \eqref{primitiveset}.
	Hence, the RMPC-AU \eqref{linear_adjusted_optimal_probem}, \eqref{eq:sol}-\eqref{eq:cl_sys}, which is subject to the feedback policy \eqref{causalpolicy}, is recursively feasible.
	\end{proof}
    
    \subsubsection{Computation of Adjustable Positive Invariant Set} \label{computing_pos_set}
    In this section, we present an algorithm for computing an adjustable positive invariant set $\mathbb O_{\textnormal{adj}}$. 
    In particular, we propose the method to compute the size of $\mathbb O_{\textnormal{adj}}$ through which we can ensure recursive feasibility (Corollary~1).
    
     
    
	Consider the affine state feedback policy $u_k = K x_k + b$ for the system dynamics \eqref{linear_adjusted_optimal_probem}. Using the uncertainty approximation \eqref{eq:unceraintyset_prim}, \eqref{primitiveset}, the closed loop system dynamics at time step $k$ can be written as: 
	\begin{align} \label{eq:closed_sys}
	       x_{k+1} = & (A+BK)x_k + E(Y_k s_k + y_k )+ Bb \nonumber \\
	               = & (A+BK)x_k + E\mathcal{S}\textnormal{vec}(Y_k)+ Ey_k + Bb,
    \end{align} 
	    where $\mathcal{S} := \textnormal{diag}(s_k^T, ..., s_k^T) \in \mathbb{R}^{n_w \times n_wn_s}$ and $\textnormal{vec}(\cdot)$ is a linear transformation which converts the matrix into a column vector in a column-wise manner. This system dynamics can be reformulated as
	    \begin{align} \label{eq:parametricuncertaintymodel_O}
            z_{k+1} :=& \left[
            \begin{array}{c}
            x_{k+1}  \\
            \hline
            \textnormal{vec}(Y_{k+1}) \\
            \hline
            y_{k+1}
            \end{array}\right] = g(z_k,\,s_k) :=  \\
            =& \left[\begin{array}{c|c|c}
            A + BK & E \mathcal{S} & E   \\
            \hline
            0 & I & 0 \\
            \hline
            0 & 0 & I
            \end{array}\right]
            \left[
            \begin{array}{c}
            x_{k}  \\
            \hline
            \textnormal{vec}(Y_k)  \\
            \hline
            y_k
            \end{array}\right] 
            +
            \left[
            \begin{array}{c}
            Bb  \\
            \hline
            0  \\
            \hline
            0
            \end{array}\right]. \nonumber
	    \end{align}
    The robust positive invariant set of the model \eqref{eq:parametricuncertaintymodel_O} represents the adjustable positive invariant set of our system \eqref{linear_model}-\eqref{uncertaintycons} with the uncertainty set approximation \eqref{eq:unceraintyset_prim}. We also denote by $\mathcal{Z}:=\mathbb{X} \times \mathbb{M}$ the feasible set of \eqref{eq:parametricuncertaintymodel_O} where $\mathbb{M} := \{ (\textnormal{vec}(Y_k),\, y_k) : (Y_k,\,y_k) \in \mathbb{Y} \subseteq  \mathbb{R}^{n_w n_s + n_w} \}$. Therefore, $z_k$ has dimension $n:=n_x+ n_w n_s + n_w$. 
    
    Finally, the robust control invariant set for the system \eqref{eq:parametricuncertaintymodel_O} constrained to the feasible set $\mathcal Z$ represents the adjustable positive invariant set for \eqref{linear_model}-\eqref{uncertaintycons} with the uncertainty set approximation \eqref{eq:unceraintyset_prim}.

	In order to compute the robust positive invariant set for \eqref{eq:parametricuncertaintymodel_O}, we first need the following definition from \cite{borrelli2017predictive}.
	
	\begin{definition}[Robust Precursor Set] \label{def_robust_precursor}
	For the autonomous system \eqref{eq:parametricuncertaintymodel_O}, we define the robust precursor set of the set $\mathcal Z$ as
	\begin{equation}
	    \textnormal{Pre}(\mathcal Z, \, \mathbb S) = \{ z\in\mathbb R^n : g(z,\,s)  \in \mathcal Z, \,\forall\, s \in \mathbb S \}
	\end{equation}
	\end{definition}
	$\textnormal{Pre}(\mathcal Z, \, \mathbb S)$ defines the set of states $z$ which can be driven into the target set $\mathcal Z$ in one step for all uncertainty $s\in\mathbb S$.
	Note that the robust precursor set of the system \eqref{eq:parametricuncertaintymodel_O} can be computed using \cite[Lemma 10.1]{borrelli2017predictive}. 
	Algorithm \ref{algo_O_inf} outlines the procedure for computing the adjustable robust control invariant set.
	If Algorithm~\ref{algo_O_inf} converges, then $\mathbb{O}_{\textnormal{adj}}$ is an adjustable positive invariant set.
\begin{algorithm}
\caption{Computation of $\mathbb{O}_{\textnormal{adj}}$}\label{algo_O_inf}
\begin{algorithmic}[1]
\State \textbf{Input} \textnormal{System model \eqref{eq:parametricuncertaintymodel_O}, $\mathcal{Z}$, $\mathbb{S}$}
\State \textbf{Output} \textnormal{$\mathbb{O}_{\textnormal{adj}}$}
\Indent
    \State $\Omega_0 \leftarrow \mathcal{Z}, \, k \leftarrow -1$
    \State \textbf{Repeat}
    \Indent
        \State $k \leftarrow k+1$, \, 
        \State $\Omega_{k+1} \leftarrow \textnormal{Pre}(\Omega_{k},\mathbb S) \, \cap \, \Omega_k$
    \EndIndent
    \State \textbf{Until} $\Omega_{k+1} = \Omega_k$
    \State $\mathbb{O}_{\textnormal{adj}} \leftarrow \Omega_k$
\EndIndent
\end{algorithmic}
\end{algorithm}

\begin{remark}
The adjustable positive invariant set computed by Algorithm \ref{algo_O_inf} can be used to find the largest constant uncertainty set which the system can indefinitely tolerate without violating its constraints. This is because the uncertainty set $\mathbb W_k =Y_k\mathbb S + y_k$ is time-invariant in the dynamics \eqref{eq:parametricuncertaintymodel_O}. 
\end{remark}

    Algorithm \ref{algo_O_inf} returns us, if it converges, a polytopic description of $\mathbb{O}_{\textnormal{adj}}$ with  $n_t$ linear constraint, which we can directly integrate into the optimization problem as
    \begin{equation} \label{term_repren}
        F  [x_k; \textnormal{vec}(Y_k);y_k] \leq f
    \end{equation}
    where $F \in \mathbb{R}^{n_t \times (n_x+n_w)}$ and $f \in \mathbb{R}^{n_t}$.

    \subsection{Tractable Reformulation}
    

    By combining the uncertainty set approximation \eqref{eq:unceraintyset_prim}, the feedback policy \eqref{causalpolicy}, and the polytopic terminal constraints \eqref{term_repren}, problem \eqref{linear_adjusted_optimal_probem} can be rewritten as 
    
    \begin{equation}
    \label{final_formulation}
    \begin{array}{ll}
    \displaystyle\min & \tau_t - \displaystyle\sum_{k=0}^{N-1}\lambda_k \rho (Y_{k\mid t} \mathbb{S} + y_{k\mid t}) \\
    \hphantom{.}\textrm{s.t.} & \\
    & \tau_t \in \mathbb{R}, \\
    & \bm{P_t} \in \mathbb{R}^{Nn_u \times Nn_s}, \, \bm{p_t} \in \mathbb{R}^{Nn_u}, \, (\bm{Y_t},\bm{y_t}) \in \mathcal{Y}\,, \\
    & \bm{\mu_t} \in \mathbb{R}^{Nl}, \, \bm{\Lambda_t} \in \mathbb{R}^{N(n_f+n_g) \times Nl}, \, \bm{\Gamma_t} \in \mathbb{R}^{n_t \times Nl}, \\
    & \bm{\mu_t} \geq  0, \quad \bm{\Lambda_t} \geq 0, \quad \bm{\Gamma_t} \geq 0, \\
    & \bm{c}^{\top} \bm{p_t} + \bm{\mu_t}^{\top} \bm{g_t} \leq \tau_t, \quad \bm{\mu_t}^{\top} \bm{G} = \bm{c_t}^{\top} \bm{P_t} \,, \\
    & \bm{C_t} \bm{p_t} + \bm{D_t} \bm{y_t} + \bm{\Lambda_t} \bm{g} \leq \bm{d_t}, \\
    & \bm{\Lambda_t} \bm{G} = \bm{C_t} \bm{P_t} + \bm{D_t} \bm{Y_t} , \\
    & \bm{\Gamma_t} \bm{g} + F[M_t\bm{p_t} + N_t \bm{y_t};\, \textnormal{vec}(Y_{N-1\mid t});\, y_{N-1\mid t}  ]  \leq  f, \\
    & \bm{\Gamma_t} \bm{G} = F[M_t \bm{P_t} + N_t \bm{Y_t};\,0;\,0] ,
    \end{array}
    \end{equation}
    where $(\tau_t, \, \bm{P_t}, \, \bm{p_t}, \, \bm{Y_t}, \, \bm{y_t}, \, \bm{\mu_t}, \, \bm{\Lambda_t}, \, \bm{\Gamma_t})$ are the decision variables;
    \begin{align}
    \mathcal{Y} : = \Big\{ &(\bm{Y_t},\bm{y_t}) :\{(Y_{k\mid t}, y_{k\mid t}) \in \mathbb{Y}\}_{k=0}^{N-1}, \nonumber\\
    &\bm{Y_t} = \textnormal{diag}[Y_{0\mid t}, ... , Y_{N-1\mid t}] \in \mathbb{R}^{Nn_w \times Nn_s}, \nonumber \\
					 &\bm{y_t} = [y_{0\mid t}, ..., y_{N-1\mid t} ]^{\top} \in \mathbb{R}^{Nn_w}  \Big\}; \nonumber
	\end{align}
	$\bm{G} := \textnormal{diag}(G,...,G) \in \mathbb{R}^{Nl \times Nn_s}$ ; $\bm{g}:= [g,...,g]^{\top} \in \mathbb{R}^{Nl}$;
	$(\bm{C_t},\, \bm{D_t},\, \bm{d_t},\, M_t,\, N_t)$ are given in Appendix A. Problem \eqref{final_formulation} is a convex optimization problem with a finite number of constraints and decision variables; therefore, it can be solved using off-the-shelf solvers such as GUROBI \cite{gurobi} or MOSEK \cite{mosek2010mosek}.

\section{Example: Vehicle Platooning} \label{sec:casestudy}

	In this section we illustrate the proposed RMPC-AU on a cooperative adaptive cruise control (CACC) application. 
	We consider a group (or \emph{platoon}) of two vehicles featuring bi-directional Vehicle-to-Vehicle (V2V) communication. The \emph{ego}-vehicle is positioned in the back, and receives the front vehicle's velocity, $v_f$. In return, the ego vehicle sends the maximum acceleration or deceleration which the front vehicle must not exceed in order to avoid a rear collision and, on the other hand, loss of contact because the spacing has become excessive.
	The front vehicle can then enforce this constraint on the acceleration/deceleration as done in \cite{lefevre2016learning,turri2017cooperative}, thus ensuring safety in the platoon operation. Our platoon setup is depicted in Fig.~\ref{fig:sheme}. 
	
	The control objective of our problem is to minimize the  tracking distance while maximizing the magnitude of the acceleration/deceleration that the front vehicle can undertake. However, there is a trade-off between these two factors: the framework introduced in the previous sections allows us to systematically address this trade-off. For instance, if the objective is energy saving, then tracking distance should be minimized, thereby reducing the aerodynamic drag on the rear vehicle. This will result in smaller bounds on the acceleration/deceleration of the front vehicle. If the objective is to allow more freedom to the movements of the front vehicle, then its maximum acceleration/deceleration should be maximized, which will result in a larger distance gap. 
	In our optimization problem, the adjustable uncertainty set is interpreted as a constraint set for the input of the front vehicle 

	We consider the longitudinal dynamics of the ego vehicle when following the front vehicle. We assume a point mass model with two states: $d$, the distance between the ego and the front vehicle, and $\Delta v$, the difference between the ego vehicle's velocity $v_{\textnormal{ego}}$ and the front vehicle's velocity $v_f$; i.e. $\Delta v = v_f - v_{\textnormal{ego}}$.
	There is one input, $u$, which represents the acceleration of the ego vehicle, and one additive disturbance, $w$, which represents the acceleration or deceleration of the front vehicle. The model can be written in the form of \eqref{linear_model} with
	\begin{align} \label{eq:ex_model}
	A =
	\begin{bmatrix}
	1            &        \Delta t \\
	0            &         1 
	\end{bmatrix}, 
	\, B = 
	\begin{bmatrix}
	0              \\
	-\Delta t
	\end{bmatrix},
	\, E = 
	\begin{bmatrix}
	0    \\
	\Delta t
	\end{bmatrix}.
	\end{align}
	where $\Delta t = 0.2\si{s}$ represents the discretization time. The uncertainty $w_k$ is bounded by some symmetric uncertainty set around zero, i.e. $w_k \in \mathbb W_k \subseteq \mathbb{R}$ where each $\mathbb W_k := \{ w \, :\, |w| \leq y_k \in \mathbb{R}_{\ge 0} \}$. $y_k$ represents the maximum acceleration and deceleration of the front vehicle. Given this shape of the uncertainty sets, $y_k$ quantifies the size of $\mathbb W_k$. Moreover, we enforce that the uncertainty set has a constant size over the horizon, i.e., $y_{t} =y_{0\mid t}=\dots=y_{N-1\mid t}$.

\begin{figure}
	\centering
	\includegraphics[width=\linewidth,height=\textheight,keepaspectratio]{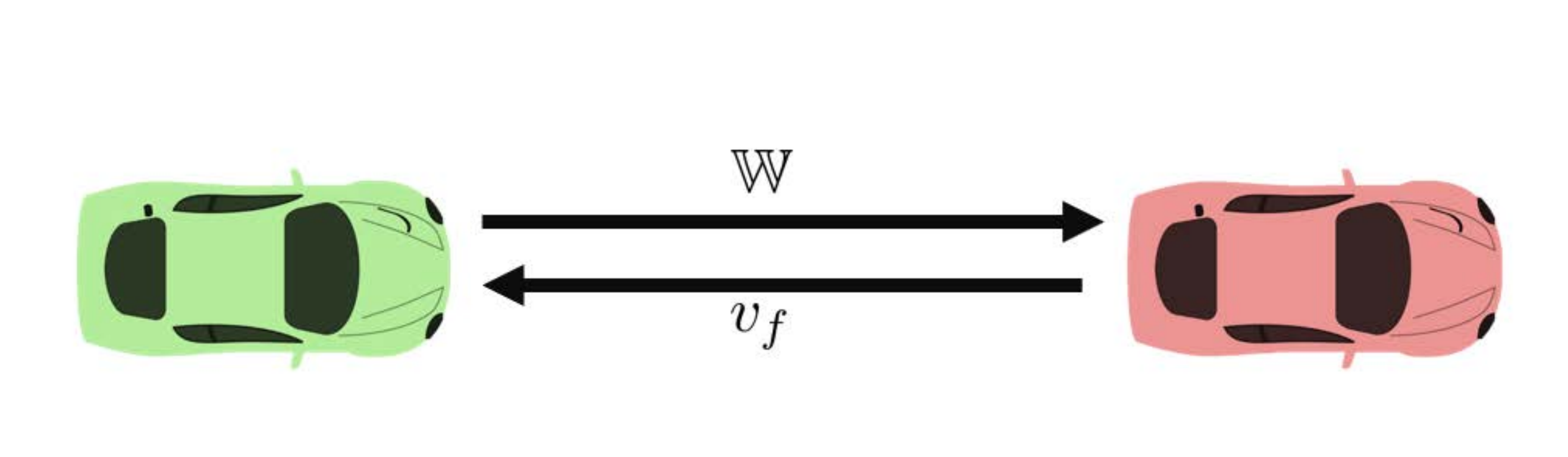}
	\caption{Communication schematic between the ego vehicle (left) and the front vehicle (right)}
	\label{fig:sheme} 
\end{figure} 

The optimal control problem at time $t$ can be formulated as 
\begin{equation}
\label{ex_optimal_probem}
\begin{array}{ll}
\displaystyle\min_{{\bm{\pi_t}(\cdot), \, y_{t} }} &\Big\{ \underset{ \bm{w_t} \in \mathbb{W}^N } {\textnormal{max}} \, \displaystyle\sum_{k=0}^{N} [ 1, 0] x_{k\mid t} \Big\} - \lambda y_{t}  \\
\hphantom{.}\textrm{s.t.} & \\
& x_{k+1\mid t} = Ax_{k\mid t} +B \pi_{k\mid t} (x_{k\mid t}) +E w_{k\mid t}, \\
& \begin{bmatrix}
	10             \\
	-5
	\end{bmatrix} \leq x_{k\mid t} \leq \begin{bmatrix}
	20              \\
	5
	\end{bmatrix}, -10 \leq \pi_{k\mid t} (x_{k\mid t}) \leq 10, \\
    & \forall \, w_{k\mid t} \in [-y_t,y_t ], \\
& \forall \, k = 0 , 1 , \dots , N-1 , \\
& x_{0\mid t} = x(t) ,\;  (x_{N\mid t},y_{t}) \in \mathbb{F},
\end{array}
\end{equation}
with $({\bm{\pi_t}(\cdot), \, y_{t} })$ as the optimization variables. Clearly, \eqref{ex_optimal_probem} is an instance of \eqref{bare_optimal_probem}.

\subsection{Effects of Adjustable Invariant Sets}
The RMPC-AU \eqref{eq:sol}-\eqref{eq:cl_sys}, and \eqref{ex_optimal_probem} is solved with adjustable positive/control invariant sets and without any terminal constraints. Figure \ref{fig:O_set} illustrates the adjustable positive invariant set and the adjustable control invariant set computed using Algorithm \ref{algo_O_inf} and Algorithm \ref{algo_C_inf}, respectively.
To construct a closed loop system for the adjustable positive invariant set, we used some stabilizing affine state feedback policy, $\pi_{k\mid t} (x_{k\mid t}) = Kx_{k\mid t}$ obtained by pole-placement method. As expected, the sets in the state space shrink as the size of the uncertainty set, $y$, increases. 
In fact, when the uncertainty set is only $\{ 0\}$, i.e. $y=0$, it can be verified that the adjustable positive and control invariant sets are equivalent to the positive and control invariant sets for a deterministic system, respectively. Also, the  adjustable positive invariant set is much smaller than the adjustable control  invariant set because we fixed the control law to be affine for computing the positive adjustable invariant set, whereas the control adjustable invariant set allows any (also nonlinear) control policy.
\begin{figure}
	\centering
	\includegraphics[width=\linewidth,height=\textheight,keepaspectratio]{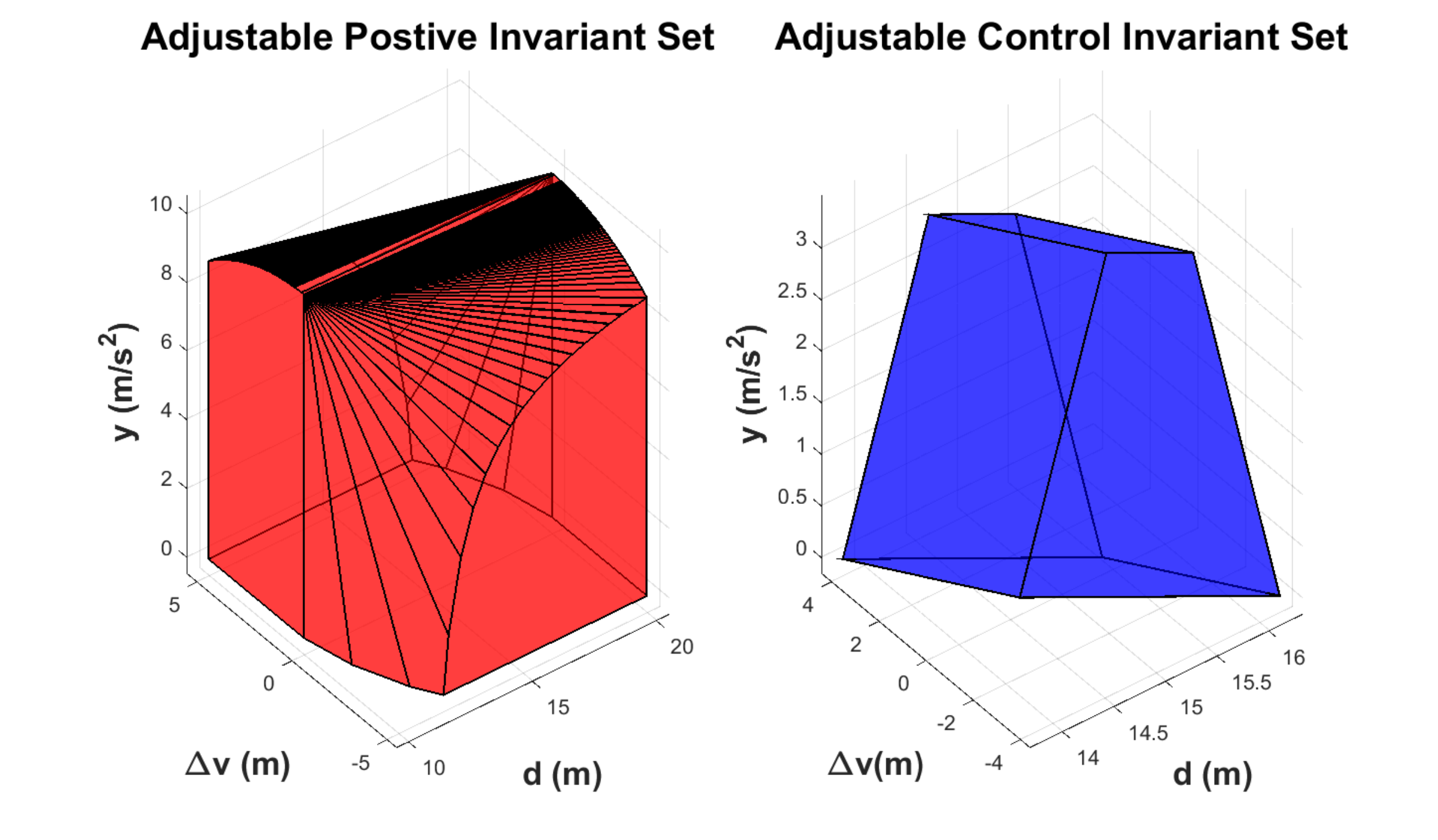}
	\caption{Adjustable Control Invariant Set (left) and Adjustable Positive Invariant Set (right)}
	\label{fig:O_set} 
\end{figure}  

In the closed loop simulation, RMPC-AU \eqref{eq:sol}-\eqref{eq:cl_sys}, and \eqref{ex_optimal_probem} was persistently feasible when the optimization problem \eqref{ex_optimal_probem} is solved with the adjustable control or positive invariant set. Note that even with the adjustable control invariant set the RMPC-AU was persistently feasible because our setting allows that $\{0\}$ is a subset of $\mathbb W$ at all times; see Remark \ref{rm1}.
It's also noted that with the adjustable positive invariant set, RMPC-AU was persistently feasible with the constant-size uncertainty set by virtue of Algorithm \ref{algo_O_inf}.

\subsection{Effects of $\lambda$}
Recall that $\lambda$ is the trade-off parameter between minimization of $d$ and the maximization of $y$. Fig.~\ref{fig:effec_lambda} shows the effect of $\lambda$ on the solution of the optimal control problem \eqref{ex_optimal_probem} with the terminal set as the positive adjustable invariant set in Fig.~\ref{fig:O_set}. The system dynamics is initialized at $d=15\si{m} $ and $\Delta v = 0\si{m/s} $. 
\begin{figure}
	\centering
	\includegraphics[width=\linewidth,height=\textheight,keepaspectratio]{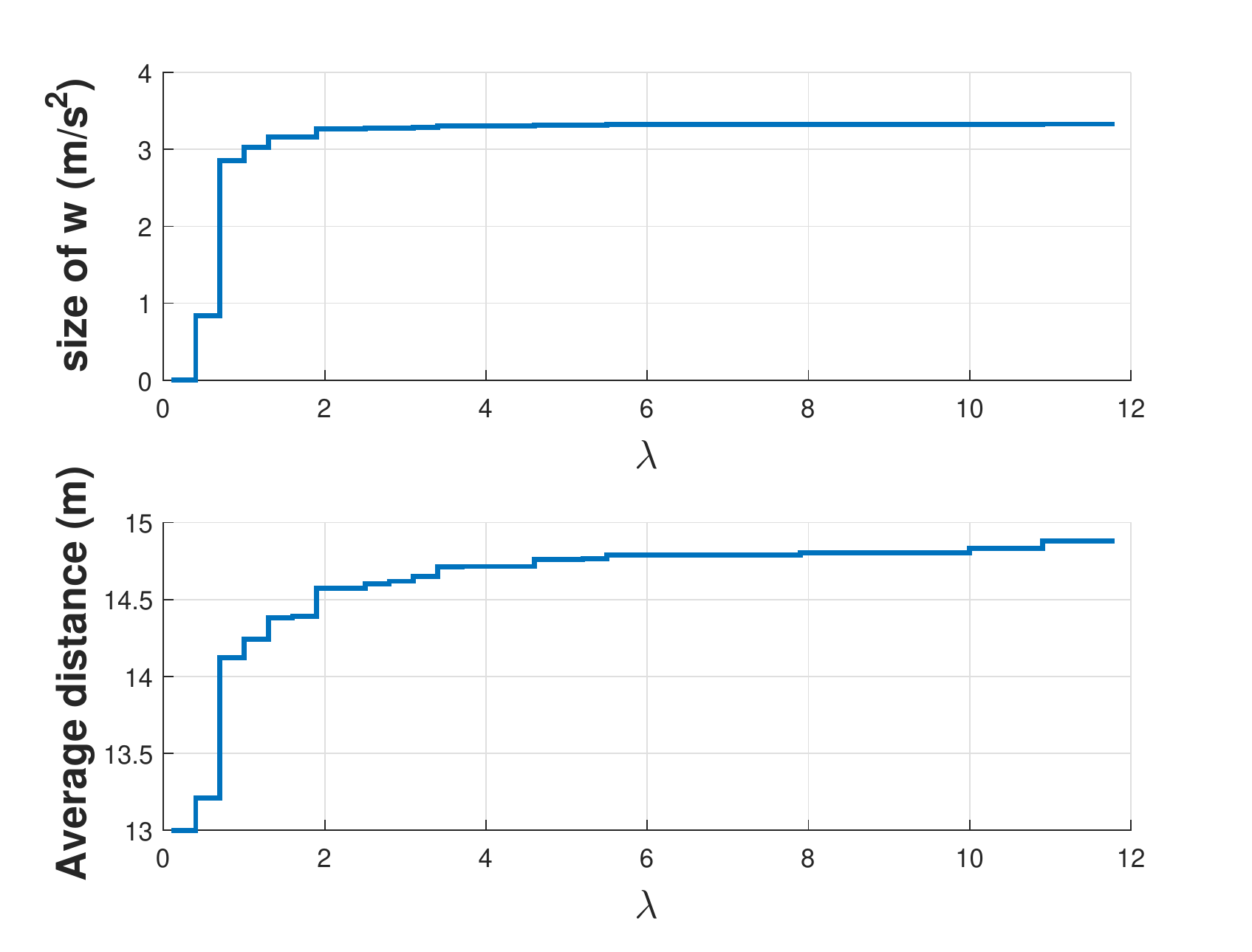}
	\caption{Effects of $\lambda$ on the size of $w$ (top) and average distance (bottom)}
	\label{fig:effec_lambda} 
\end{figure} 
In the upper plot in Fig.~\ref{fig:effec_lambda}, $y$ is plotted at different values of $\lambda$: higher values of $\lambda$ bring about a larger $y$, which is consistent with the formulation in~\eqref{ex_optimal_probem} and to be expected.
The lower plot in Fig.~\ref{fig:effec_lambda} shows, for different values of $\lambda$, the average distance when the front vehicle does not decelerate or accelerate; i.e., $y=0$. Also, $d$ increases as the value of $\lambda$ increases: since the front vehicle is allowed greater freedom in maneuvering, the tracking distance has to be correspondingly increased in order to preserve safety.

\section{Conclusions} \label{sec:conclusions}
	In this work, we studied the Robust Model Predictive Control problem with adjustable uncertainty sets. Our method optimizes over uncertainty sets as well as control policies, and guarantees robust recursive feasibility by enforcing an adjustable invariant set as a terminal constraint. We applied our framework to a cooperative adaptive cruise control and showed the validity of our approach in simulation. Future work includes new methods to construct adjustable invariant set for time-varying uncertainty sets and the actual experiment of our approach on CACC and examining our approach's effects on energy saving by adjusting the inter-vehicular distance.

	\section*{Appendix A: Tractable Reformulation of RMPC-AU}\label{append_lin_reform}
	The optimization problem \eqref{bare_optimal_probem} can be formulated into a computationally tractable convex problem through the following steps from the work of \cite{zhang2017robust}.
	First, we can reformulate our optimization problem \eqref{linear_adjusted_optimal_probem} using the polytopic terminal constraints \eqref{term_repren} as
	\begin{equation}
    \label{compact_linear_adjusted_optimal_probem}
    \begin{array}{ll}
    \displaystyle\min & \Big\{ \underset{ \mathbf{s_t} \in \mathbb{S}^N } {\textnormal{max}} \, \bm{c}^{\top}(\bm{P_ts_t+p_t})  \Big\} - \displaystyle\sum_{k=0}^{N-1}\lambda_k \rho (Y_{k\mid t} \mathbb{S} + y_{k\mid t}) \\
    \hphantom{.}\textrm{s.t.} & \\
    & \bm{P_t} \in \mathbb{R}^{Nn_u \times Nn_s}, \, \bm{p_t} \in \mathbb{R}^{Nn_u}, \, (\bm{Y_t},\bm{y_t}) \in \mathcal{Y}\,, \\
    & \bm{C_t(P_ts_t + p_t) } + \bm{D_t} (\bm{Y_t} \bm{s_t} + \mathbf{y_t}) \leq \bm{d_t}, \, \\
    & F \begin{bmatrix}
	M_t\bm{(P_ts_t + p_t) } + N_t (\bm{Y_t} \bm{s_t} + \bm{y_t}) \\
	\textnormal{vec}(Y_{N-1\mid t}) \\
	y_{N-1\mid t}]
	\end{bmatrix} \leq f,\\
	& \forall \, \bm{s_t} \in \mathbb S^N, 
    \end{array}
    \end{equation}
	where $( \bm{P_t}, \, \bm{p_t}, \, \bm{Y_t}, \, \bm{y_t})$ are the decision variables.
	$\bm{C_t} \in \mathbb R^{(n_f+n_g) \times Nn_u}$, $\bm{D_t}\in \mathbb R^{(n_f+n_g) \times Nn_w}$, and $\bm{d_t}\in \mathbb R^{(n_f+n_g)}$ together represent the state and input constraints \eqref{stateinputconstraints}; see the example \cite{goulart2006optimization}. Similar transformation is done to construct $M_t \in \mathbb R^{n_x \times Nn_u}$ and $N_t \in \mathbb R^{n_x \times Nn_w}$ to represent $x_{t+N\mid t}(\bm{s_t})$.
	
	Finally, by the duality argument of Ben-tal et al. \cite{ben2009robust}, the problem is reformulated into the finite-dimensional convex optimization problem \eqref{final_formulation}.

    \balance
    
    \section*{Appendix B: Adjustable control invariant set computation}
	In this section, we discuss the problem of computing adjustable control invariant set. Similar to the adjustable positive invariant set, the adjustable control invariant set can be in different shapes.
	In this section, we propose an algorithm to compute the special case of the adjustable control invariant set, $\mathbb{C}_{\textnormal{adj}}$, which can allow the same uncertainty at all time while guaranteeing the persistent feasibility of the RMPC-AU \eqref{bare_optimal_probem}-\eqref{eq:cl_sys}.
   
    Consider the linear system \eqref{linear_model}. Using the uncertainty restriction \eqref{eq:unceraintyset_prim}, the system dynamics at time $k$ can be re-written as: 
	\begin{align}
	x_{k+1} &= Ax_k + Bu_k + E(Ys_k+y) \nonumber \\
	&= Ax_k + Bu_k + E\mathcal{S}\textnormal{vec}(Y) + Ey,
    \end{align} 
    where $\mathcal{S} := \textnormal{diag}(s_k^T, ..., s_k^T) \in \mathbb{R}^{n_w \times n_wn_s}$.
    Finally, the parametric uncertainty model can be formulated as
    \begin{align} \label{eq:parametricuncertaintymodel_C}
            z_{k+1} =& \left[
            \begin{array}{c}
            x_{k+1}  \\
            \hline
            \textnormal{vec}(Y_{k+1}) \\
            \hline
            y_{k+1}
            \end{array}\right] = h(z_k,\, u_k,\,s_k) = \nonumber \\
            =& \left[
            \begin{array}{c|c|c}
            A & E \mathcal{S} & E   \\
            \hline
            0 & I & 0 \\
            \hline
            0 & 0 & I
            \end{array}\right]
            \left[
            \begin{array}{c}
            x_{k}  \\
            \hline
            \textnormal{vec}(Y_k) \\
            \hline
            y_k
            \end{array}\right]
            +
           \left[
        \begin{array}{c}
        B  \\
        \hline
        0 \\
        \hline
        0
        \end{array}\right] u_k.
    	\end{align}
    The robust control invariant set of the model \eqref{eq:parametricuncertaintymodel_C} represents the adjustable control invariant set of our system \eqref{linear_model}-\eqref{uncertaintycons} with the uncertainty set approximation \eqref{eq:unceraintyset_prim}.
    This set can be computed by Algorithm \ref{algo_C_inf} which uses the precursor set of the control system $\eqref{eq:parametricuncertaintymodel_C}$ instead of the autonomous system $\eqref{eq:parametricuncertaintymodel_O}$. In this case, a precursor set is defined by the following.
    \begin{definition}[Robust Precursor Set] \label{def_robust_precursor_C}
	For the system \eqref{eq:parametricuncertaintymodel_C}, we denote the robust precursor set to the set $\mathcal Z$ as
	\begin{equation}
	    \textnormal{Pre}(\mathcal Z, \, \mathbb S) = \{ z\in\mathbb R^n : \exists u\in \mathbb U \text{ s.t. } h(z,u,s)  \in \mathcal Z, \,\forall\, s \in \mathbb S \}.
	\end{equation}
	\end{definition}
	$\textnormal{Pre}(\mathcal Z, \, \mathbb S)$ defines the set of states $z$ which can be driven into the target set $\mathcal Z$ in one step while satisfying the system input constraints. 
\begin{algorithm}
\caption{Computation of $\mathbb{C}_{\textnormal{adj}}$}\label{algo_C_inf}
\begin{algorithmic}[1]
\State \textbf{Input} \textnormal{System model \eqref{eq:parametricuncertaintymodel_C}, $\mathcal{Z}$, $\mathbb{S}$}
\State \textbf{Output} \textnormal{$\mathbb{C}_{\textnormal{adj}}$}
\Indent
    \State $\Omega_0 \leftarrow \mathcal{Z}, \, k \leftarrow -1$
    \State \textbf{Repeat}
    \Indent
        \State $k \leftarrow k+1$, \, 
        \State $\Omega_{k+1} \leftarrow \textnormal{Pre}(\Omega_{k},\mathbb S) \, \cap \, \Omega_k$
    \EndIndent
    \State \textbf{Until} $\Omega_{k+1} = \Omega_k$
    \State $\mathbb{C}_{\textnormal{adj}} \leftarrow \Omega_k$
\EndIndent
\end{algorithmic}
\end{algorithm}
If Algorithm~\ref{algo_C_inf} converges, then $\mathbb{C}_{\textnormal{adj}}$ is an adjustable control invariant set.
	\bibliographystyle{plain}        
	\bibliography{library}

\end{document}